\numberwithin{equation}{section}
\newtheorem{theorem}{Theorem}[section]
\newtheorem{lemma}[theorem]{Lemma}
\newtheorem{claim}[theorem]{Claim}
\newtheorem{problem}[theorem]{Problem}
\newtheorem{nota}[theorem]{Notation}
\newtheorem{remark}[theorem]{Remark}
\newcommand{\cC}{\mathcal{C}}
\newcommand{\cM}{\mathcal{M}}
\newcommand{\cP}{\mathcal{P}}
\newcommand{\cL}{\mathcal{L}}
\newcommand{\cH}{\mathcal{H}}
\newcommand{\PG}{\mathrm{PG}}
\def\pd{\hbox{\rm pd}}
\def\diam{\hbox{\rm diam}}
\newcommand{\cut}[1]{}
\begin{document}

\title{Partition dimension of projective planes}

\author{{\bf Zoltán Blázsik,  Zolt\'an L\'or\'ant Nagy}\\
{\small MTA--ELTE Geometric and Algebraic Combinatorics Research Group}\\
{\small E\"otv\"os Lor\'and University}\\
{\small H--1117 Budapest, P\'azm\'any P.\ s\'et\'any 1/C, Hungary}\\
{\small \tt{blazsik@cs.elte.hu}, \tt{  nagyzoli@cs.elte.hu}}}

\date{}

\maketitle

\begin{abstract}
	We determine the partition dimension of the incidence graph $G(\Pi_q)$ of the projective plane $\Pi_q$ up to a constant factor $2$ as $(2+o(1))\log_2{q}\leq \pd(G(\Pi_q))\leq (4+o(1))\log_2{q}.$

\noindent \textbf{Keywords}: {partition dimension, resolving set, projective plane, ovals, $\zeta$-sets, semi-random method}

\end{abstract}

\section{Introduction}

In graph theory, a large number of different concepts were introduced to distinguish  or identify every vertex in a given graph.
Notably the vertices are usually distinguished via adjacency to a certain set of vertices --- like in case of identifying codes in graphs and distinguishing sets or locating-dominating sets \cite{babai, ide1, ide2, slater} --- or via distance from a certain set of vertices --- like in case of resolving sets, metric and partition dimension \cite{BC, Ch}. In this paper we will study the latter concept on a highly symmetric graph family of great importance in many branches of combinatorics, namely on the incidence graphs of  projective planes.

For vertices $u$ and $v$ in a connected graph $G$, the distance $d(u, v)$ is the length of a
shortest path between $u$ and $v$ in $G$.
For an ordered set $W = \{w_1, w_2, \ldots , w_k\}$ of
vertices in a connected graph $G$ and a vertex $v$ of $G$, the $k$-vector (ordered $k$-tuple)
$$r(v|W)=(d(v, w_1), d(v, w_2), \ldots, d(v, w_k))$$
is referred to as the\textit{ (metric) representation} of $v$ with respect to $W$. The set $W$
is called a \textit{resolving set} for $G$ if the vertices of $G$ have distinct representations.
A resolving set containing a minimum number of vertices is called a minimum
resolving set of $G$. The number of vertices is the so called
\textit{metric dimension} and denoted by $\mu(G)$.\\
The distance concept was naturally generalized to subsets of points due to Chartrand et al. \cite{Ch}.\\
For an ordered $k$-partition $\textbf{S} = \{S_1, S_2, \ldots , S_k\}$ of $V(G)$ and a vertex $v$ of $G$, the
representation of $v$ with respect to  $\mathbf{S}$ is defined as the $k$-vector
$$r(v|\textbf{S}) = (d(v, S_1), d(v, S_2),\ldots , d(v, S_k)).$$

 The partition $\textbf{S}$ is called a \textit{ resolving partition} if the $k$-vectors $r(v | \textbf{S})$, $v \in V(G)$,
are distinct. The minimum $k$ for which there is a resolving $k$-partition of $V(G)$ is
the \textit{partition dimension} $\pd(G)$ of $G$. We note that the notation also make sense if $\mathbf{S}$ is any family of sets.
 Generally, we say that a set system \textit{separates} a vertex set if no two vertices have equal distances from every set of the set system.


Both dimension concepts has been widely investigated, see \cite{BC, CGH, T1} for surveys.
Although they are analogously defined and there are connections between the two parameters, in general, they are not similar in nature.
To illustrate this phenomenon, we recall some results concerning $\mu(G)$ and $\pd(G)$.

\begin{claim}\cite{Ch} $\pd(G)\leq \mu(G)+1$ for all graphs $G$.
\end{claim}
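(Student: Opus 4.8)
The plan is to exhibit an explicit resolving partition built directly from a minimum resolving set. Let $W=\{w_1,w_2,\ldots,w_k\}$ be a minimum resolving set of $G$, so that $k=\mu(G)$ and, by definition, the representations $r(v\mid W)=(d(v,w_1),\ldots,d(v,w_k))$ are pairwise distinct as $v$ ranges over all of $V(G)$. The natural candidate partition is
$$\mathbf{S}=\bigl\{\{w_1\},\{w_2\},\ldots,\{w_k\},\,V(G)\setminus W\bigr\},$$
that is, each element of the resolving set becomes its own singleton class, and everything left over is swept into one additional class. This partition has at most $k+1=\mu(G)+1$ parts (exactly $k+1$ when $W\neq V(G)$, and only $k$ parts if $W=V(G)$, in which case the bound holds trivially), so it suffices to prove that $\mathbf{S}$ is resolving.

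The key observation is that distance to a singleton class coincides with distance to its unique element: for each $i$ we have $d(v,S_i)=d(v,\{w_i\})=d(v,w_i)$. Consequently the first $k$ coordinates of $r(v\mid\mathbf{S})$ are precisely the coordinates of $r(v\mid W)$. I would then argue that for any two distinct vertices $u,v\in V(G)$ the vectors $r(u\mid\mathbf{S})$ and $r(v\mid\mathbf{S})$ differ already in these first $k$ coordinates: since $W$ is a resolving set, $r(u\mid W)\neq r(v\mid W)$, and this inequality is inherited verbatim by the truncation of $r(\cdot\mid\mathbf{S})$ to its first $k$ entries. Hence $r(u\mid\mathbf{S})\neq r(v\mid\mathbf{S})$, so $\mathbf{S}$ separates every pair of vertices and is a resolving partition, giving $\pd(G)\le k+1=\mu(G)+1$.

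There is no genuine obstacle here; the entire content is the single remark that the partition-distance to a singleton reproduces the ordinary point-to-point distance, after which the resolving property transfers for free from $W$ to $\mathbf{S}$. The only point deserving a line of care is the degenerate situation $W=V(G)$, where the leftover class $V(G)\setminus W$ is empty and must be dropped so that $\mathbf{S}$ is a legitimate partition into nonempty classes; in that case one simply uses the $k$ singletons themselves, which still resolve $G$ and yield $\pd(G)\le\mu(G)\le\mu(G)+1$. It is worth emphasizing that the extra class $V(G)\setminus W$ plays no role in the separation argument whatsoever — it exists purely to make the collection cover $V(G)$ — which is exactly why the partition dimension can exceed the metric dimension by at most one.
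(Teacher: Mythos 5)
Your proof is correct: the paper itself states this claim without proof (it is quoted from \cite{Ch}), and your argument --- turning each element of a minimum resolving set into a singleton class, sweeping the rest into one leftover class, and observing that distance to a singleton equals distance to its element --- is precisely the standard argument from that reference. The care you take with the degenerate case $W=V(G)$ (dropping the empty leftover class) is the right touch, and nothing further is needed.
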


\begin{claim}\cite{CGH} Given natural numbers $\alpha$ and $\beta$ where $3 \leq \alpha \leq \beta + 1$, there exists a graph $G$ where
$\mu(G) = \beta$ and $\pd(G) = \alpha$.
\end{claim}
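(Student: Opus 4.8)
The plan is to prove the statement by an explicit construction: for every admissible pair $(\alpha,\beta)$ I will build a single tree $T=T_{\alpha,\beta}$ and verify directly that $\mu(T)=\beta$ and $\pd(T)=\alpha$. The two boundary conditions should fall out naturally, since $\pd(G)=2$ forces $G$ to be a path (so $\alpha\ge 3$ is exactly what is needed for nontrivial families) and $\alpha\le\beta+1$ is precisely the preceding Claim $\pd(G)\le\mu(G)+1$. The tree is a caterpillar-type graph defined as follows. Take a centre $u$ and attach $\alpha$ pendant leaves $w_1,\dots,w_\alpha$; in addition attach a path $u=p_0,p_1,\dots,p_m$ with $m:=\beta-\alpha+1\ge 0$, and at each $p_i$ with $1\le i\le m$ hang two further pendant leaves $y_{i,1},y_{i,2}$. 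The case $m=0$ recovers the star $K_{1,\alpha}$, which already realizes the boundary pair $\alpha=\beta+1$.

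The engine of both computations is an elementary observation about twins, i.e.\ pairs of vertices having equal distance to every other vertex. In any resolving set at least $t-1$ members of a class of $t$ mutual twins must be chosen, and in any resolving partition the $t$ twins must lie in $t$ distinct classes: if two twins shared a class $S_1$ then $d(\cdot,S_1)=0$ for both, while every other class avoids both twins and is therefore equidistant from them, so their representations would coincide. In $T$ the maximal twin classes are exactly $\{w_1,\dots,w_\alpha\}$ of size $\alpha$ together with the pairs $\{y_{i,1},y_{i,2}\}$ of size $2$, and both observations are additive over these disjoint classes.

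For the metric dimension the twin bound gives $\mu(T)\ge(\alpha-1)+m=\beta$, and the matching upper bound follows from the classical formula for the metric dimension of a tree \cite{slater}: the exterior major vertices are $u$, carrying $\alpha$ legs, and $p_1,\dots,p_m$, each carrying $2$ legs, so $\mu(T)=(\alpha-1)+\sum_{i=1}^{m}(2-1)=\beta$. For the partition dimension the size-$\alpha$ twin class already forces $\pd(T)\ge\alpha$, so it remains to exhibit a resolving partition into $\alpha$ classes $S_1,\dots,S_\alpha$. I place each star leaf in its own class, $w_k\in S_k$, set $u\in S_1$, and then distribute the backbone vertices and the pairs so that the two leaves of every pair land in different classes.

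The main obstacle is precisely this last step: proving that such an $\alpha$-partition separates all $\Theta(m)$ vertices. The intended mechanism is that the coordinate ``distance to the class containing $u$'' behaves as a radial coordinate recording the depth of a vertex along the backbone, so vertices at different backbone levels are automatically separated; within one level the spine vertex $p_i$ and its two pair-leaves are told apart by their differing radial values together with membership in distinct classes, while the star leaves are pinned down by their private zero coordinate. Making this rigorous uniformly in $\alpha$ and $m$ requires a careful global choice of the partition and a check that no nearer $S_1$-vertex spoils the radial coordinate. A direct computation in the smallest case $\alpha=\beta=3$ already confirms that three classes suffice with all representations distinct, and the general argument should follow this same template.
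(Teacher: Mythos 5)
First, a point of reference: the paper does not prove this claim at all --- it is stated with a citation to \cite{CGH} --- so there is no ``paper proof'' to compare against, and your proposal has to stand entirely on its own. Much of it does: the caterpillar $T=T_{\alpha,\beta}$ is a sensible choice, the twin arguments correctly give $\mu(T)\ge(\alpha-1)+m=\beta$ and $\pd(T)\ge\alpha$, and the leaves-minus-exterior-major-vertices formula for trees gives $\mu(T)=(\alpha+2m)-(m+1)=\beta$, since the exterior major vertices are exactly $u,p_1,\dots,p_m$. The genuine gap is the step you flag yourself: the inequality $\pd(T)\le\alpha$ is never actually proved, and the prescription you offer (each $w_k$ in its own class, $u\in S_1$, the two leaves of every pair in distinct classes) is \emph{not} sufficient to guarantee a resolving partition. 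Concretely, take $\alpha=3$, $m=2$ and the partition $S_1=\{u,w_1,p_1\}$, $S_2=\{w_2,p_2,y_{1,1},y_{2,1}\}$, $S_3=\{w_3,y_{1,2},y_{2,2}\}$: it satisfies every constraint you state, yet $r(w_2)=(1,0,2)=r(y_{1,1})$, so it is not resolving. The ``careful global choice'' is therefore not a detail to be deferred; it is the entire content of the upper bound, and verifying one small case does not establish it uniformly in $m$.

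The good news is that the gap closes with one explicit choice, so your construction does work. Keep the backbone and all pair-leaves out of $S_1$: set $S_1=\{u,w_1\}$, $S_2=\{w_2\}\cup\{p_i: 1\le i\le m\}\cup\{y_{i,1}: 1\le i\le m\}$, $S_3=\{w_3\}\cup\{y_{i,2}: 1\le i\le m\}$, and $S_k=\{w_k\}$ for $4\le k\le\alpha$. Then $d(\cdot,S_1)$ genuinely is your radial coordinate, and the representations are $r(u)=(0,1,1,1,\dots,1)$, $r(w_1)=(0,2,2,2,\dots,2)$, $r(w_2)=(1,0,2,2,\dots,2)$, $r(w_3)=(1,2,0,2,\dots,2)$, $r(w_k)$ with its zero in coordinate $k$ (and $S_k$ a singleton), while for $1\le i\le m$ one gets $r(p_i)=(i,0,1,i+1,\dots,i+1)$, $r(y_{i,1})=(i+1,0,2,i+2,\dots,i+2)$ and $r(y_{i,2})=(i+1,1,0,i+2,\dots,i+2)$. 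Vertices in different classes have different zero patterns, so only within-class pairs need checking: in $S_1$ the second coordinate separates $u$ from $w_1$; in $S_2$ the only first-coordinate collisions are $w_2$ versus $p_1$ and $y_{i,1}$ versus $p_{i+1}$, which the third coordinate ($2$ versus $1$) breaks; in $S_3$ the first coordinate already separates $w_3$ and all the $y_{i,2}$ from one another. This holds for every $m\ge 0$ (the case $m=0$ is your star) and every $\alpha\ge 3$, so $\pd(T)\le\alpha$ and, with your lower bounds, the realization theorem follows. With this paragraph added your argument is a complete and self-contained proof; as written, it is an outline whose decisive step is missing.
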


The study of dimension parameters concerning incidence graphs of designs or geometries has been initiated only recently in  \cite{Bailey, Bailey1, CGH, He}.  Note that a similar concept of identifying codes in special graphs is also recently studied \cite{ide0, ide2}.

Let $\Pi_q$ be an arbitrary finite projective plane of order $q$ of a point set $\cP$ and a line set $\cL$. We denote the plane by $\PG(2,q)$ if we assume that the plane is built on a finite field $\mathbb{F}_q$.
The incidence graph of a plane $\Pi_q$ is denoted by $G(\Pi_q)$.  We will denote the classes of the bipartite graph $G(\Pi_q)$ by $\cP[G(\Pi_q)]$  and $\cL[G(\Pi_q)]$, corresponding to points and lines, respectively. Similarly, we introduce this notation for any subset $Z$ of the vertex set of $G(\Pi_q)$ in general, namely $\cP[Z])$ will denote those vertices of $Z$ which correspond to the points of $\Pi_q$ and $\cL[Z]$ will denote those vertices of $Z$ which correspond to the lines of $\Pi_q$.

The metric dimension of this incidence graph was determined by   H\'eger and Tak\'ats in \cite{He}.

\begin{theorem}\label{marcella} If $q$ is large enough, then $\mu(G(\Pi_q))=4q-4$.
\end{theorem}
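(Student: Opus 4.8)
The plan is to reduce the metric condition to a combinatorial separation property, prove a lower bound by an incidence count, and match it by an explicit construction; since the statement is attributed to \cite{He}, I sketch the argument one would reconstruct. First I would record the distances in $G(\Pi_q)$: the graph is bipartite with $\diam(G(\Pi_q))=3$, and one checks directly that $d(P,P')=2$ for distinct points $P,P'$, that $d(\ell,\ell')=2$ for distinct lines $\ell,\ell'$, and that $d(P,\ell)\in\{1,3\}$ according to whether $P\in\ell$ or not. Write a candidate resolving set as $W=A\cup B$ with $A=\cP[W]$ and $B=\cL[W]$. Then the representation of a point $P\notin A$ is identically $2$ on $A$ and records exactly the \emph{trace} $\{\ell\in B:P\in\ell\}$ on $B$, and dually for lines; moreover a point and a line can never share a representation as soon as $W\neq\emptyset$. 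Hence $W$ resolves $G(\Pi_q)$ if and only if the points outside $A$ have pairwise distinct traces on $B$ and the lines outside $B$ have pairwise distinct traces on $A$. The decisive local fact is that a point incident with at least two lines of $B$ is separated from every other point (two lines of $B$ through $P$ meet only in $P$); so the only unresolved points are those on $0$ or $1$ lines of $B$, their confusion classes being the set of all $0$-covered points together with, for each $\ell\in B$, the $1$-covered points of $\ell$. Each class may keep at most one representative outside $A$.

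For the lower bound I would count incidences. Put $a=|A|$, $b=|B|$, and let $n_0,n_1$ count the points lying on $0$, resp.\ $1$, lines of $B$. Since there are at most $1+b$ confusion classes and each leaves at most one point outside $A$, one gets $n_0+n_1\le a+b+1$. Counting point--line incidences with $B$ and using that all remaining points are at least doubly covered yields \[ b(q+1)=\sum_{P}t_B(P)\ge 2\bigl(q^2+q+1-n_0-n_1\bigr)\ge 2q(q+1)-2(a+b), \] so that $b(q+3)+2a\ge 2q(q+1)$; the dual count gives $a(q+3)+2b\ge 2q(q+1)$, and adding them produces $(a+b)(q+5)\ge 4q(q+1)$, i.e.\ $a+b\ge 4q-O(1)$. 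The real difficulty, and the reason for the hypothesis that $q$ be large, is to tighten this to the exact value $4q-4$: the bound $t_B(P)\ge 2$ is wasteful, so one must control the whole distribution of the covering numbers $t_B$ and of its dual. I expect this to rest on stability theorems for blocking sets and on the rigidity of near-pencil configurations, which force the extremal examples to look essentially like the construction below.

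For the upper bound I would exhibit a resolving set of size $4q-4$ built from two pencils and, dually, two lines. Taking $B$ to be the union of the pencils through two points $A_1,A_2$, every point off the line $A_1A_2$ lies on two lines of $B$ and is separated, while the points of $A_1A_2$ collapse into a single confusion class; dually, taking $A$ from the points of two lines separates every line off their common point. Removing the redundant members of the two pencils and of the two lines, and letting the few surviving exceptional points (resp.\ lines) be absorbed into $A$ (resp.\ $B$) --- choosing the two lines so that the absorbed points lie on them, so that the two tasks share these vertices --- brings the total to exactly $4q-4$. Verifying that no confusion class of size $\ge 2$ then survives outside $A$ or $B$ completes the construction and matches the lower bound.
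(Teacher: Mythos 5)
First, a contextual point: the paper you are working from does not prove Theorem \ref{marcella} at all --- it is quoted from H\'eger and Tak\'ats \cite{He} --- so your proposal has to be measured against that proof. Your reduction is correct and is indeed the standard one: writing $W=A\cup B$, resolving is equivalent to distinct traces for points outside $A$ and for lines outside $B$, any vertex covered by two elements of the other side is automatically separated, and the confusion classes are the $0$-covered vertices together with, for each $\ell\in B$, its $1$-covered points (and dually). Your incidence count is also sound, but look at what it actually yields: $(a+b)(q+5)\ge 4q(q+1)$ gives only $a+b\ge 4q-\frac{16q}{q+5}$, i.e.\ roughly $a+b\ge 4q-15$. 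The entire content of the theorem is the exact constant $-4$, and that is precisely the step you defer to unnamed ``stability theorems for blocking sets''. This is a genuine gap, not a routine tightening: no averaging over the covering distribution $t_B$ can distinguish $4q-4$ from $4q-15$, since such counts are insensitive to $O(1)$ changes. In \cite{He} the sharp lower bound is the bulk of the work and runs through a different mechanism: a point set separating all lines (a semi-resolving set) becomes a double blocking set after adding at most two points, so nontrivial lower bounds on the size $\tau_2$ of double blocking sets --- the origin of the hypothesis that $q$ be large --- transfer back, followed by a delicate analysis of how $A$ and $B$ share the burden when only the lines outside $B$ must be separated by $A$.

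The upper bound is also unfinished, and here the gap is concrete. One must discard three vertices from the two pencils ($2q+1$ lines) and three from the two lines ($2q+1$ points), and almost every way of doing so fails, because each deletion creates new $0$- or $1$-covered vertices whose confusion class tolerates only one member outside $W$. For instance, with $\ell_1=P_1P_2$ and $P_1\in\ell_2$, taking $B$ to be the two pencils minus $\{\ell_1,m_1,m_2\}$ (with $m_i$ through $P_i$) and $A$ the two lines minus $\{P_1,x_1,x_2\}$ (with $x_i\in\ell_i$) has the right size $4q-4$ but is \emph{not} resolving: the points $m_1\cap m_2$ and $x_1$ are both $0$-covered and both outside $A$. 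A choice that does work is $A=(\ell_1\cup\ell_2)\setminus\{P_1,P_2,x_1\}$ and $B$ the union of the pencils with carriers $P_1$ and $P_2$ minus $\{\ell_1,\ell_2,m_1\}$, where $x_1\in\ell_1$ and $m_1$ passes through $P_1$: then $x_1$ is the unique $0$-covered point outside $A$, $m_1$ is the unique $0$-covered line outside $B$, and one checks class by class that every $1$-covered vertex is alone in its confusion class (e.g.\ for $X\in m_1$ off $\ell_1\cup\ell_2$, the only $1$-covered point of the line $XP_2$ outside $A$ is $X$ itself). Your sketch neither exhibits an admissible choice of the six discarded vertices nor performs this verification, so as written neither inequality of the theorem is actually established.
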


Chapell, Gimbel and Hartman \cite{CGH} gave bounds on the partition dimension $\pd(G)$ in terms of the \textit{diameter} $\diam(G)$ of the graph $G$ and investigated the case $\diam(G)=2$. They mentioned that investigating the order of a graph with given partition dimension and diameter appears to
be more difficult when the diameter exceeds two. The incidence graph of projective planes provides an infinite family for well structured graphs of diameter three, so this can be considered as a partial motivation as well for the following problem, besides H\'eger and Tak\'ats' result Theorem \ref{marcella}.

\begin{problem} Determine the partition dimension of the incidence graph of a finite projective plane.
\end{problem}

Our main results are as follows.

\begin{theorem}\label{also} The partition dimension of the incidence graph of a projective plane of order $q$ is at least of order $(2+o(1))\log_2{q} $.
\end{theorem}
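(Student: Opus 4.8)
The plan is to obtain the bound by counting the distinct distance-representations of the \emph{points} alone; the lines will not be needed at all. First I would record the distance structure of $G(\Pi_q)$, which has diameter $3$: for a point $P$ and a part $S_i$ one has $d(P,S_i)\in\{0,1,2,3\}$, where the value is $0$ exactly when $P\in S_i$, it is $1$ exactly when $S_i$ contains a line through $P$, it is $2$ when $S_i$ contains a point but no line through $P$, and it is $3$ when $S_i$ consists only of lines, none of which passes through $P$. Since every point belongs to exactly one of the parts $S_1,\dots,S_k$, the representation $r(P\mid\mathbf{S})$ has a single coordinate equal to $0$, namely the index of the part containing $P$. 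Consequently two points lying in different parts are automatically separated, and the whole problem reduces to separating points that lie in a common part.

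Next I would argue that, within a fixed part $S_j$, the only information capable of separating two of its points is \emph{binary}. Indeed, for $P\in S_j$ and any other part $S_i$ with $i\neq j$, the distance $d(P,S_i)$ is governed by the single bit ``is $P$ incident with some line of $\cL[S_i]$?'': if $S_i$ contains a point this bit selects between the distances $1$ and $2$, and if $S_i$ is point-free it selects between $1$ and $3$; in either case the bit and the distance determine one another, because whether $S_i$ contains a point is a property of $S_i$ that is independent of $P$. Thus to each point $P\in S_j$ I can attach its \emph{coverage pattern} $A(P)=\{\,i\neq j:\ P\text{ lies on a line of }S_i\,\}\subseteq[k]\setminus\{j\}$, and two points of $S_j$ are separated by $\mathbf{S}$ if and only if their coverage patterns differ.

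The count then follows at once: the number of distinct coverage patterns available inside a single part is at most $2^{k-1}$, so the number of points that can be mutually resolved is at most $k\cdot 2^{k-1}$. Since a resolving partition must separate all $|\cP[G(\Pi_q)]|=q^2+q+1$ points, I would conclude $q^2+q+1\le k\,2^{k-1}$, and a short computation (e.g.\ supposing $k\le(2-\varepsilon)\log_2 q$ and checking that then $k\,2^{k-1}=o(q^2)$) yields $k\ge(2-o(1))\log_2 q$. The crux of the argument is the reduction in the second paragraph: although the incidence graph has diameter $3$ and each coordinate of a representation a priori ranges over four values, the $0$-coordinate is forced by part-membership while the remaining coordinates each carry only one bit of separating information. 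This collapse from a factor $3^{k}$ down to $2^{k}$ is precisely what upgrades the naive entropy bound to the sharp leading constant $2$; the only bookkeeping to watch is the harmless $\log_2 k$ lower-order term and the distinction between point-free and point-containing parts, both of which are absorbed into the $o(1)$.
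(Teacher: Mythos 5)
Your proof is correct, and it arrives at the same final inequality as the paper, $k\cdot 2^{k-1}\ge q^2+q+1$, by a genuinely streamlined route. The paper sorts the partition classes into $r$ point-only, $s$ line-only and $t$ mixed ones, pigeonholes to find a class containing at least $\frac{q^2+q+1}{s+t}$ lines and a class containing at least $\frac{q^2+q+1}{r+t}$ points, derives the two inequalities $2^{r+t-1}\ge\frac{q^2+q+1}{s+t}$ and $2^{s+t-1}\ge\frac{q^2+q+1}{r+t}$, and then minimizes $r+s+t$ subject to both, asserting that the minimum occurs at $r=s=0$. You use the identical core observation --- within a fixed class, each remaining coordinate carries exactly one bit of information about a point, namely whether that class contains a line through it, since the alternative value ($2$ versus $3$) is a property of the class alone and not of the point --- but you apply it uniformly to every class and sum over all $k$ classes, instead of pigeonholing into the largest class and optimizing over the class types. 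Your per-class bound $2^{k-1}$ is slightly weaker than the paper's type-sensitive count (which treats only line-containing classes as informative coordinates), but the loss is absorbed into the $o(1)$; in exchange, you never need the dual (line-side) inequality at all, and you completely bypass the optimization over $(r,s,t)$, which the paper only asserts (``one can easily see'') and which is the least rigorous step of its argument. The one delicate point --- that the bit and the distance determine one another only because ``does $S_i$ contain a point'' does not depend on $P$ --- is exactly the point you make explicit, so the reduction to binary coverage patterns is sound.
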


\begin{theorem}\label{felso} The partition dimension of the incidence graph of a projective plane of order $q$ is at most of order $(4+o(1))\log_2{q} $.
\end{theorem}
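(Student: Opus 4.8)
The plan is to produce a single random partition of the vertex set $V(G)=\cP\cup\cL$ into $k+1$ classes, all but one of which are \emph{mixed} (containing both points and lines), and to argue by the probabilistic (semi-random) method that for $k=(4+o(1))\log_2 q$ it resolves $G=G(\Pi_q)$ with positive probability. The starting observation is the distance structure of $G$: it is bipartite of diameter $3$, and for a class $S$ and a point $P\notin S$ one has $d(P,S)=1$ exactly when some line of $S$ passes through $P$, and otherwise $d(P,S)=2$ whenever $S$ contains any point at all (the value $3$ occurs only for a pointless $S$); dually for a line. Thus a class $S$ \emph{separates} a pair of vertices precisely when exactly one of them lies at distance $1$ from $S$. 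The crucial point, which fixes the constant, is that a single mixed class can simultaneously separate point--point pairs (through its lines), line--line pairs (through its points) and point--line pairs; every one of the $k$ classes therefore acts on every one of the $\binom{|V(G)|}{2}=(2+o(1))q^{4}$ pairs at once. (Were one to use only line-classes to separate points and only point-classes to separate lines, the budget would split and cost $\sim 8\log_2 q$; mixing is exactly what brings the constant down to $4$.)

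Concretely I would assign each vertex of $G$ independently to one of the classes $\{0,1,\dots,k\}$, sending each point and each line into class $i\in\{1,\dots,k\}$ with probability $\pi=(\ln 2+o(1))/q$ each, and into a background class $0$ otherwise. Then each informative class $S_i$ contains about $(\ln 2)q$ lines and $(\ln 2)q$ points. For any fixed point $P$, since exactly $q+1$ lines pass through it, the probability that $S_i$ contains one of them is $1-(1-\pi)^{q+1}\to \tfrac12$, and dually every line is met by the points of $S_i$ with probability $\to\tfrac12$; this uses only the regularity of the incidence structure (every point on $q+1$ lines, every line with $q+1$ points), so it holds uniformly over all points and all lines of an arbitrary plane $\Pi_q$. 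Writing $p=\tfrac12+o(1)$ for this common coverage probability, the probability that a fixed pair is \emph{not} separated by a fixed class is, for each of the three pair-types, $p^{2}+(1-p)^{2}=\tfrac12+o(1)$, the balance $p=\tfrac12$ being exactly what minimises it.

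Finally I would bound the probability that a fixed pair survives unseparated by all $k$ classes. Because $\pi$ is small, the allocation of the $O(q)$ lines and points relevant to a pair behaves like independent placements (a Poissonisation argument), so the $k$ classes act almost independently and this survival probability is at most $(\tfrac12+o(1))^{k}=2^{-(1-o(1))k}$. Summing over all $(2+o(1))q^{4}$ pairs, the expected number of unseparated pairs is at most $(2+o(1))q^{4}\,2^{-(1-o(1))k}$, which drops below $1$ as soon as $k=(4+o(1))\log_2 q$. Hence some outcome is a resolving partition into at most $k+1=(4+o(1))\log_2 q$ classes, proving the theorem. The main obstacle is the dependence control in this last step: I must show that the coupling introduced by the partition constraint (each relevant line or point lands in exactly one class) and by the single line $PP'$ shared by a point-pair $\{P,P'\}$ (respectively the unique common point of a line-pair) perturbs the idealised bound $(\tfrac12)^{k}$ by only a $2^{o(k)}$ factor. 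Since the shared object lands in an informative class with probability just $k\pi=o(1)$ and the remaining relevant lines and points are assigned independently, this correlation is indeed negligible; making the $o(1)$ error terms uniform over all $\sim q^{4}$ pairs simultaneously is the part that requires the careful semi-random analysis.
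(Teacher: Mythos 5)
Your route is genuinely different from the paper's, and its architecture is viable. The paper's construction is highly structured: it fixes a support (an incident point--line pair), takes $H_0$ to be the $q$ major points on the support line (this one class separates every point from every line by a parity argument, and the core from the common vertices), then uses $\approx 3\log_2 q$ randomly chosen but structured $\zeta$-sets (half of a point row plus half of a line pencil) to separate almost all pairs of common points and of common lines, and finally $\approx \log_2 q$ greedily built classes, organized around binary searching sets, to separate the major points, the major lines, and the $O(q)$ leftover unseparated pairs. The $3+1$ split is forced on the paper by its own tool: a $\zeta$-set is at distance $2$ from every major point except its base point, so no number of $\zeta$-sets can separate the major points, a $\log_2 q$ searching phase is needed anyway, and the leftover common pairs are folded into it. Your uniformly random mixed partition sidesteps all of this bookkeeping: there is no core, point--line pairs enter the same union bound as the other two types (the paper instead kills them with the parity of $H_0$), and the constant $4$ falls out as $\log_2$ of the number of pairs, $(2+o(1))q^4$. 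If your probabilistic estimate holds, this is a cleaner proof; what the paper's approach buys in exchange is that its only probabilistic ingredient is the elementary first-moment Lemma 3.2, everything else being explicit or greedy.

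The gap is exactly the step you flagged, and the reason you give for dismissing it does not cover it. There are two distinct dependencies: (a) the coupling of the two vertices of a pair through the shared object (the line $PP'$, resp.\ the common point of two lines), and (b) the coupling among the $k$ events ``class $i$ fails to separate'' caused by the partition constraint, i.e.\ by the fact that the $q$ lines through $P$ are distributed multinomially over the classes rather than hitting them independently. Your argument (the shared object is active with probability $k\pi=o(1)$; all other relevant objects are assigned independently) disposes of (a) only; it says nothing about (b), which is already present for a single vertex's pencil. Moreover, the obvious Poissonisation is lossy here: conditioning a Poisson$(q)$ pencil on having exactly $q$ lines costs a factor $\Theta(\sqrt q)$ per vertex, hence $\Theta(q)$ per pair, which turns $q^{4}2^{-k}$ into $q^{5}2^{-k}$ and degrades your constant from $4$ to $5$. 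A related trap: estimating $\mathbb{P}[A=\mathbf{1}_S]$ by inclusion--exclusion over the classes with a multiplicative $(1+O(k^2/q))$ error in each term does not work either, since the alternating signs can inflate the relative error by $3^{|S|}=q^{\Theta(1)}$.

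A correct patch exists and is short. Condition on $PP'$ lying in the background class (the complementary case occurs with probability $k\pi$ and is handled by the same argument with $k-1$ classes). Let $A,B\in\{0,1\}^k$ be the hit-vectors of the two pencils; they are then independent and identically distributed, so the probability that no class separates the pair is $\sum_{S}\mathbb{P}[A=\mathbf{1}_S]\,\mathbb{P}[B=\mathbf{1}_S]\le\max_S\mathbb{P}[A=\mathbf{1}_S]$. Now bound the maximum uniformly in $S$: writing $s=|S|$ and $m=k-s$, factor $\mathbb{P}[A=\mathbf{1}_S]$ as $\mathbb{P}[\hbox{classes off }S\hbox{ empty}]\cdot\mathbb{P}[\hbox{all classes of }S\hbox{ hit}\mid\hbox{classes off }S\hbox{ empty}]$. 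The first factor is $(1-m\pi)^q\le e^{-mq\pi}=2^{-m}$. Under the conditioning the lines are still i.i.d.\ with slightly boosted class probabilities, and the occupancy indicators of the $s$ bins are negatively associated, so the second factor is at most $\prod_{j\in S}\mathbb{P}[\hbox{class }j\hbox{ hit}\mid\cdot]\le\bigl(\tfrac12(1+O(k\pi))\bigr)^{s}=2^{-s}e^{O(k^2/q)}$. Hence $\mathbb{P}[A=\mathbf{1}_S]\le 2^{-k}(1+o(1))$ uniformly, the per-pair failure probability is $2^{-k}(1+o(1))$, and your union bound over $(2+o(1))q^4$ pairs gives a resolving partition with $k+1=(4+o(1))\log_2 q$ classes. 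With this lemma inserted, your proof is correct and establishes the theorem by a route different from, and arguably simpler than, the paper's.
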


Note that in view of a general bound of Theorem 3.1 in \cite{CGH} concerning the maximal degree of the graph, $\pd(G(\Pi_q))\geq \log_3 (q+2)$.
\medskip

Our paper is built up as follows.
In Section 2. we prove Theorem \ref{also}, then Section 3. is devoted to derive Theorem \ref{felso} using probabilistic and graph theoretic tools. A survey on applications of the probabilistic method in finite geometry can be found in \cite{gacs}. Finally we discuss  open problems in Section 4.

\section{Lower Bound}

We are going to show that $\pd(G(\Pi_q))$ is at least of size $(2+o(1))\log_2{q} $. To this end, let's consider a resolving partition $\textbf{S}$ with sets $\{\cP_1, \cP_2, \dots, \cP_r, \cL_1, \cL_2, \dots, \cL_s, \cM_1, \cM_2, \dots, \cM_t \}$ where $\cP_i \subseteq \cP[G(\Pi_q)]$, $L_j \subseteq \cL[G(\Pi_q)]$ and $\cM_k$ is a mixed subset containing vertices from both $\cP[G(\Pi_q)]$ and $\cL[G(\Pi_q)]$. Since $\textbf{S}$ is a resolving partition we know that for all vertices of the incidence graph the corresponding vectors are pairwise different. Let's examine the possible values of the coordinates of these vectors depending on the type (corresponding to point or line) of the vertex $v$.



\begin{enumerate}
\item If $v \in \cP[G(\Pi_q)]$ then
\begin{itemize}
\item $d(v,\cP_i) = \left \{ \begin{array}{ll} 0 & v \in \cP_i \\ 2 & \textrm{otherwise}. \end{array} \right .$
\item $d(v,\cL_j) = \left \{ \begin{array}{ll} 1 & \textrm{there is a line in } \cL_j \textrm{ which is incident with } v \\ 3 & \textrm{otherwise}. \end{array} \right .$
\item $d(v, \cM_k) = \left \{ \begin{array}{ll} 0 & v \in \cM_k \\ 1 & \textrm{there is a line in } M_k \textrm{ which is incident with } v \\ 2 & \textrm{otherwise}. \end{array} \right .$
\end{itemize}
\item If $v \in \cL[G(\Pi_q)]$ then
\begin{itemize}
\item $d(v,\cL_j) = \left \{ \begin{array}{ll} 0 & v \in \cL_j \\ 2 & \textrm{otherwise}. \end{array} \right .$
\item $d(v, \cP_i) = \left \{ \begin{array}{ll} 1 & \textrm{there is a point in } \cP_i \textrm{ which is incident with } v \\ 3 & \textrm{otherwise}. \end{array} \right .$
\item $d(v, \cM_k) = \left \{ \begin{array}{ll} 0 & v \in \cM_k \\ 1 & \textrm{there is a point in } M_k \textrm{ which is incident with } v \\ 2 & \textrm{otherwise}. \end{array} \right .$
\end{itemize}
\end{enumerate}

Note that if there is a partition class which contains only one type of vertices then by this last observation one can distinguish all the vertices of $\cP[G(\Pi_q)]$ from the vertices of $\cL[G(\Pi_q)]$. Moreover if we consider two vertices which are of the same type then their vector is different if they are not in the same partition class. But we know that there is a partition class which contains at least $\frac{q^2+q+1}{s+t}$ many lines and also there is a class which contains at least $\frac{q^2+q+1}{r+t}$ many points.

For these lines, their representation vectors are the same in the coordinates corresponding to the subsets $\cL_j$. The number of coordinates remaining to distinguish two such lines depends on the type of their partition class. Namely if this class is a class with just lines then it's $r+t$ but when this class is a mixed class then it's just only $r+t-1$. Since the values of these remaining coordinates could only be 1 or 3 for the coordinates corresponding to a subset $\cP_i$ and could only be 1 or 2 for the mixed classes because they do not contain these. Hence the following inequality has to hold:

\[
2^{r+t-1} \ge \frac{q^2+q+1}{s+t}.
\]

A similar argument works for the points too and it gives that

\[
2^{s+t-1} \ge \frac{q^2+q+1}{r+t}.
\]

We would like to minimize $r+s+t$ under the above conditions. One can easily see that the minimum could be reached by taking $s=r=0$ (Note: in this setup one should be careful because the points and the lines are not automatically distinguished). In that case the two condition happens to be the same:

\[
t \cdot 2^{t-1} \ge q^2+q+1.
\]

Hence the theoretical lower bound follows on the partition dimension number.

\section{Upper bound}



Here we prove that $\pd(G(\Pi_q))\leq (4+o(1)) \log_2 q$. First we will outline the construction that provides the desired bound and introduce some key tools. Next we prove two lemmas concerning main ingredients of the constructions. Finally we show that the construction is indeed a resolving partition.

\begin{nota} \normalfont
Let us choose an incident pair of  point $\widetilde{P_0}$ and line $\widetilde{\ell_0}$ and we call them the \textit{support} of the construction. Denote the points incident to $\widetilde{\ell_0}$ by $\widetilde{P_i}$ ($i\in [0, q]$) and the lines incident to $\widetilde{P_0}$  by $\widetilde{\ell_i}$  ($i\in [0, q]$). We call the point set $\{\widetilde{P_i} : 0<i\leq q \}$ and line set   $\{ \widetilde{\ell_i} : 0<i\leq q\}$ \textit{major point}s and \textit{major line}s in the construction, respectively. The set $\cC:=\{\widetilde{P_i} : 0<i\leq q \}\cup\{ \widetilde{\ell_i} : 0<i\leq q\}$ is the \textit{core} of the construction. Let's call the points and lines which are not in the core \textit{common points} and \textit{common lines} (altogether the \textit{common vertices}).
\end{nota}


\noindent\textbf{Construction}\\
Our partition set system $\cH$ consists of $4$ subsystems:  $$\cH=\{H_0\} \cup \cH_1 \cup \cH_2 \cup \{H_{-1}\},$$ where $\cH_1= \bigcup_{i=1}^k H_i$, $\cH_2= \bigcup_{i=k+1}^{k+l} H_i$.\\

$H_0$ is defined as
$H_0:= \{\widetilde{P_i} \mid i\in [1\ldots q]\}$, furthermore $H_{-1}$ is defined such that it completes the system, that is,  $\bigcup_{i=-1}^{k+l} H_i= \cP[G(\Pi_q)] \cup \cL[G(\Pi_q)]$ with $H_{-1}$ being disjoint from any other $H_i$.

Now we define the set system $\cH_1$.

Any $H \in \cH_1$ is built up the following way:  choose a major  point $\widetilde{P_i}$  and a major line $\widetilde{\ell_j}$  which will be the base of the set $H$. Divide the point set $\widetilde{\ell_j}\setminus \widetilde{P_0}$ into two equal parts.  Put the points of the first part into $H$ and put also every line determined by $\widetilde{P_i}$ and points of the second part. This way $|H|=q$ will hold for every element of $\cH_1$. Any subset of  $\cP \cup \cL$ that can be created this way is called \textit{a $\zeta$-set} (on the base point $\widetilde{P_i}$ and base line $\widetilde{\ell_j}$).

$k\approx 3\log_2 q$ $\zeta$-sets will be chosen randomly in such a way that almost all points and lines are uniquely determined by the distances from the sets of $\cH_1$, if we restrict ourselves to the common vertices.

Finally, $\cH_2$ will consist of $l \approx \log_2 q$ sets which will distinguish all of the remaining non-separated pairs of vertices. The existence of the these sets is implied by the following lemmas.

\begin{lemma}\label{kiszamolo} One can choose $k$ suitable $\zeta$-sets in order to separate any pair of elements from $(\cP \setminus \{\widetilde{P_i}\})$ and any pair from  $(\cL \setminus \{\widetilde{\ell_i}\})$ except for at most $m(k)$ pairs in total, with $$m(k)=\frac{2\binom{q^2}{2}}{2^k}.$$
\end{lemma}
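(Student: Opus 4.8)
The plan is to choose the $k$ $\zeta$-sets independently at random and to bound the expected number of pairs that stay non-separated by the first moment method. Concretely, I would sample each $\zeta$-set $H$ by picking its base point $\widetilde{P_i}$ uniformly among the $q$ major points, its base line $\widetilde{\ell_j}$ uniformly among the $q$ major lines, and then a uniformly random balanced bipartition of the $q$ points of $\widetilde{\ell_j}\setminus\{\widetilde{P_0}\}$ into a first half $F$ (the points placed into $H$) and a second half $S$ (whose joins with $\widetilde{P_i}$ form the lines of $H$); here I take $q$ even, the odd case being analogous. The entire argument then reduces to one probabilistic claim: a random $\zeta$-set separates a fixed pair of common points, and likewise a fixed pair of common lines, with probability at least $\tfrac12$.

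To establish this for a pair of common points $u,v$, I would first record the distance function explicitly, using the values for mixed classes from Section 2. Introduce the shadow $x_u := (\widetilde{P_i}u)\cap\widetilde{\ell_j}$, which for a common point always lies in $\widetilde{\ell_j}\setminus\{\widetilde{P_0}\}$, since otherwise $u$ would lie on $\widetilde{\ell_0}$ and hence be a core point. One then checks that $d(u,H)=0$ iff $u\in F$, that $d(u,H)=1$ iff $x_u\in S$, and that $d(u,H)=2$ otherwise. Thus the only data the $\zeta$-set reads off $u$ is the position of $x_u$ relative to $F/S$ together with the flag whether $u\in\widetilde{\ell_j}$, and from this I would deduce that $u,v$ fail to be separated precisely when either both shadows fall in $S$, or both fall in $F$ while $u,v$ are simultaneously on or off the base line.

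The crux, and the step I expect to be the main obstacle, is turning this characterization into the clean constant $\tfrac12$, which hinges on controlling the degenerate configuration $x_u=x_v$. That equality holds exactly when $u,v,\widetilde{P_i}$ are collinear, i.e. when the base point lies on the line $uv$; as $uv$ meets $\widetilde{\ell_0}$ in a single point, at most one choice of base point is bad, so this event has probability at most $1/q$ over the base-point randomization, and there I simply lower-bound the separation probability by $0$. For every good base point the two shadows are distinct points of $\widetilde{\ell_j}\setminus\{\widetilde{P_0}\}$, and a random balanced bipartition places them in different halves — an event that always separates — with probability $\frac{q}{2(q-1)}\ge\tfrac12$. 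Multiplying gives $\Pr[\text{separated}]\ge\frac{q-1}{q}\cdot\frac{q}{2(q-1)}=\tfrac12$. The case of two common lines is handled by the dual argument, with the roles of base point and base line interchanged, so that the degenerate configuration (now $\ell,\ell'$ meeting $\widetilde{\ell_j}$ in a common point) is killed instead by the base-line randomization.

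Finally I would assemble the pieces. Since the $k$ $\zeta$-sets are sampled independently, a fixed pair survives all of them with probability at most $2^{-k}$; there are $\binom{q^2}{2}$ pairs of common points and $\binom{q^2}{2}$ pairs of common lines, so by linearity of expectation the expected number of non-separated pairs is at most $2\binom{q^2}{2}2^{-k}=m(k)$. Consequently some outcome achieves at most $m(k)$ non-separated pairs, which is exactly the assertion of the lemma.
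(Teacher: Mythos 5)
Your argument is correct --- I checked the shadow characterization of the distances, the fact that distinct shadows landing in different halves always forces separation, the $1/q$ bound on the collinear degeneracy, and the clean product $\frac{q-1}{q}\cdot\frac{q}{2(q-1)}=\tfrac12$ --- but it takes a genuinely different route from the paper. The paper does \emph{not} sample the bases with replacement: it picks $k$ \emph{distinct} major points and $k$ \emph{distinct} major lines at random, pairs them up, and only randomizes the halvings independently. The price is that the $k$ separation events for a fixed pair $Q,Q'$ are then not independent, so the paper splits according to whether the (unique) intersection of $QQ'$ with $\widetilde{\ell_0}$ is one of the chosen base points --- in which case that single $\zeta$-set is essentially useless for the pair --- and arrives at the two-term estimate
\[
\frac{q-k+1}{q+1}\left(\frac{q-2}{2q-2}\right)^{k}+\frac{k}{q+1}\cdot\frac{q-1}{q}\left(\frac{q-2}{2q-2}\right)^{k-1},
\]
which then still has to be verified to lie below $2^{-k}$. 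Your i.i.d.\ sampling dissolves this dependence entirely: the degenerate collinear configuration is absorbed into a marginal per-set separation probability of exactly $\tfrac12$, independence gives $2^{-k}$ per pair with no further computation, and the constant in $m(k)$ comes out on the nose. In that respect your proof is cleaner and more transparent than the paper's.

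What the paper's choice buys, however, is disjointness of the $\zeta$-sets, and this is where your version needs a patch before it can serve the rest of Section 3. Two $\zeta$-sets with the same base point $\widetilde{P_i}$ --- which with-replacement sampling permits --- can share lines, and the sets of $\cH_1$ must be pairwise disjoint because $\cH$ is a partition; the proof of Lemma \ref{elrendezo} also implicitly uses that each major point is the base of at most one $\zeta$-set. This does not affect the lemma as literally stated, but for the application you should either condition on all base points and all base lines being distinct (an event of probability $1-O(k^2/q)$, which inflates the expected number of bad pairs, hence $m(k)$, only by a factor $1+o(1)$), or combine Markov's inequality with a union bound over base collisions; either fix is routine, but it should be said.
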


\begin{proof}[Proof of Lemma \ref{kiszamolo}]
We choose $k$ points and $k$ lines from the major point and line set uniformly at random, and index them by $\widetilde{P_i}$ and $\widetilde{\ell_i}$, $i\in [1\ldots k]$. Next, we choose $k$ $\zeta$-sets (on the base points $\widetilde{P_i}$ and base line $\widetilde{\ell_i}$, by taking   $\lfloor\frac{q}{2}\rfloor$ points from each line $\widetilde{\ell_i}$ uniformly at random, leaving the support point intact.
This enables us to calculate the expected value of  not-separated pairs.\\
We distinguish the cases when the pair is a pair of point or pair of lines, but
note that the two cases are similar due to the symmetry of the $\zeta$-sets and the duality of the structure.

In the calculation below, we omit integer part for transparency (that is, we consider the case $q$ is even) but it is straightforward to see that the reasoning works for the $q$ odd case as well.

Let $Q$ and $Q'$ be two random points in $\cP \setminus \{\widetilde{P_i}\}$. The probability of the separation essentially depends on two factors: whether or not $QQ'$ intersects $\widetilde{\ell_0}$ in a point $\widetilde{P_t}$, $t\in [1\ldots k]$; and whether or not one of $Q$ and $Q'$ is incident to $\widetilde{\ell_t}$. These subcases provide the following for a random point pair:

\[ \mathbb{P}( Q, Q' {\mbox{ not separated by $k$ random $\zeta$-set}} )= \]
\[\mathbb{P}( Q, Q' {\mbox{ not separated by $k$ random $\zeta$-set}} \ \& \  QQ'\cap \widetilde{\ell_0} \not\in  \{\widetilde{P_t}, t\in [1\ldots k]\}  ) +  \]
\[ \mathbb{P}( Q, Q' {\mbox{ not separated by $k$ random $\zeta$-set}} \ \& \ QQ'\cap \widetilde{\ell_0} \in  \{\widetilde{P_t}, t\in [1\ldots k]\}  ) \]
 \[ \leq \frac{q-k+1}{q+1} \left(\frac{q-2}{2q-2}\right)^k + \frac{k}{q+1}\left( \frac{q-1}{q}\left(\frac{q-2}{2q-2}\right)^{k-1}      \right). \]

Indeed, suppose first that   $QQ'$ intersects $\widetilde{\ell_0}$ in a point outside $\widetilde{P_t}$, $t\in [1\ldots k]$.
That case, for every $\zeta$-set $H$ on  the base $\widetilde{P_i}$,  $\widetilde{\ell_i}$, $d(Q, H)\neq d(Q', H)$ holds if exactly one of the lines $\widetilde{P_i}Q$, $\widetilde{P_i}Q'$ belongs to $H$, hence the probability of separation by $H$ is at least $\frac{q/2}{q-1}$. (Note that equality does not hold here as $Q$ or $Q'$ might be a point of the  $\zeta$-set.)

On the other hand,  if $QQ'$ intersects $\widetilde{\ell_0}$ in a point $\widetilde{P_t}$, $t\in [1\ldots k]$, then the above argument works for all but one  $\zeta$-set, $H$ (on the base $\widetilde{P_t}$,  $\widetilde{\ell_t}$). However, if  $Q\not \in  \widetilde{\ell_t},   Q'\not \in \widetilde{\ell_t}$, then  $d(Q, H)= d(Q', H)$ surely holds, while  if  $Q \in  \widetilde{\ell_t},$ or $ Q' \in \widetilde{\ell_t}$, then  $d(Q, H)\neq d(Q', H)$ only if $Q$ or $Q'$ is a point in $H$.

Easy calculation shows that  \[ \frac{q-k+1}{q+1} \left(\frac{q-2}{2q-2}\right)^k + \frac{k}{q+1}\left( \frac{q-1}{q}\left(\frac{q-2}{2q-2}\right)^{k-1}      \right)<\left(\frac{1}{2}\right)^k. \]

Taking into consideration the number of point pairs, and the dual case for the number of line pairs, we obtain a bound on the expected value of the non-separated point pairs and line pairs:

\[\mathbb{E}(  {\mbox{ not separated pairs by $k$ random $\zeta$-set}} ) =\] \[ 2\binom{q^2}{2}[ \mathbb{P}( Q, Q' {\mbox{ not separated by $k$ random $\zeta$-set}}) ] < \frac{2\binom{q^2}{2}}{2^k}. \]

The statement thus follows.	
\end{proof}


\cut{
To state the following lemma, we need to introduce a notation.
\begin{nota} \normalfont
A set system $A_1, A_2, \ldots, A_d$ of disjoint sets  is a \textit{ system of weight $w$} if the number of pairs in the sets adds up to $w$.
\end{nota}

\begin{lemma}\label{elrendezo} Given an incident pair of point and line (as a support) and the corresponding core $\cC$ in $G(\Pi_q)$, a  system $\textbf{S}$ of disjoint sets in $V(G(\Pi_q))\setminus \cC$  of weight at most $q/2$, and $(3+o(1))\log_2 q$ $\zeta$-sets with distinct base points and lines ($\widetilde{P_i}$ $\widetilde{\ell_i}$),  one can always choose a system $\cH'$ of $(1+o(1))\log_2 q$ disjoints sets consists of points and lines  disjoint to the $\zeta$-sets and to the major points such that

\begin{itemize}
\item The elements of $\cH'$ separate the major points and also separates the major lines (defined by the support),
\item The elements of $\cH'$ separate the given pair of vertices.
\item The support line and point is separated from any prescribed set from the set system  $\textbf{S}$.
\end{itemize}
\end{lemma}}


\begin{lemma}\label{elrendezo} One can choose a system $\cH_2$ of $\lceil\log_2 q\rceil$ disjoint sets consist of both points and lines which are disjoint to the $\zeta$-sets and to the major points as well such that with these new sets the corresponding representations of the vertices will be pairwise different.
\end{lemma}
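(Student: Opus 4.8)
The plan is to use the $l=\lceil\log_2 q\rceil$ sets of $\cH_2$ only to separate the pairs that $H_0$ and $\cH_1$ leave together, relying on the fact that appending coordinates to a representation can only refine it: any pair already separated by $H_0$ or by $\cH_1$ stays separated, so no new collisions can be introduced. First I would record exactly which pairs survive. The set $H_0$ gives even distances ($0$ or $2$) to points and odd distances ($1$ or $3$) to lines, and within each class it assigns the extremal value to the core vertices (distance $0$ to major points, distance $3$ to major lines). Hence $H_0$ alone distinguishes points from lines, major points from common points, and major lines from common lines. Applying Lemma \ref{kiszamolo} with $k\approx 3\log_2 q$, so that $m(k)=2\binom{q^2}{2}/2^{k}=O(q)$, the sets of $\cH_1$ separate all pairs of common points and all pairs of common lines except for at most $m(k)$ of them. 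Consequently the only pairs still unseparated are the $\binom{q}{2}$ pairs of major points, the $\binom{q}{2}$ pairs of major lines, and at most $m(k)=O(q)$ common pairs; it suffices to build $\cH_2$ separating these.

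For the major points and major lines I would use an explicit binary encoding, exploiting that points and lines act independently on the two codes. For a mixed set $H$ (containing at least one point and one line) one has $d(\widetilde{P_i},H)=1$ exactly when $H$ contains a line through $\widetilde{P_i}$, and $d(\widetilde{P_i},H)=2$ otherwise; dually $d(\widetilde{\ell_m},H)\in\{1,2\}$ according to whether $H$ contains a point of $\widetilde{\ell_m}$. The key geometric fact is that every common line other than $\widetilde{\ell_0}$ meets $\widetilde{\ell_0}$ in a single major point and every common point other than $\widetilde{P_0}$ lies on a single major line; thus inserting a common line into $H_{k+j}$ toggles the $j$-th coordinate of exactly one major point while leaving every major line at distance $2$, and symmetrically inserting a common point toggles one major line and leaves the major points untouched. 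Fixing pairwise distinct codewords in $\{1,2\}^{l}$ for the $q$ major points and, independently, for the $q$ major lines (possible since $2^{l}\ge q$), I realise the $j$-th bit of $\widetilde{P_i}$ by placing one common line through $\widetilde{P_i}$ in $H_{k+j}$ and the $j$-th bit of $\widetilde{\ell_m}$ by placing one common point of $\widetilde{\ell_m}$ in $H_{k+j}$. This separates every pair of major points and every pair of major lines.

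It remains to kill the $O(q)$ leftover common pairs, and here I would use the slack in \emph{which} line or point realises each code bit: each $1$-bit of $\widetilde{P_i}$ may be carried by any of the $\approx q$ common lines through $\widetilde{P_i}$, and each $1$-bit of $\widetilde{\ell_m}$ by any of the $\approx q$ common points of $\widetilde{\ell_m}$. These choices are precisely what govern the distances from $\cH_2$ to the common vertices, so I would route the realisations (and, if needed, the bijection assigning codewords to major vertices) so that every surviving common pair acquires differing coordinates in some $H_{k+j}$. Since only $O(q)$ pairs must be handled while the number of admissible realisations is vastly larger, a greedy or first-moment selection should succeed, and the same abundance keeps the $l$ sets pairwise disjoint, disjoint from every $\zeta$-set, and free of major points. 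The main obstacle is exactly this coordination step: because every common vertex is incident to a major vertex, any element added to break a common pair perturbs a major codeword, so the delicate point is to show that the major-code constraints and the $O(q)$ common-pair constraints are jointly satisfiable. I expect to settle this by a greedy/alteration argument that absorbs the perturbations into coordinates where the relevant codeword already has the required bit, rather than by any lengthy computation.
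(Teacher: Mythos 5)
Your overall strategy tracks the paper's own proof quite closely: after $H_0$ and $\cH_1$, encode the major points and (independently, by duality) the major lines with distinct binary codewords realized through incidences --- the paper phrases this as a family of $\lceil\log_2 q\rceil$ \emph{searching sets} $T(H)$ on the major points --- and use the freedom in choosing the realizing lines and points to kill the leftover common pairs. However, there are two genuine gaps. The first is in your inventory of surviving pairs. The support point $\widetilde{P_0}$ and support line $\widetilde{\ell_0}$ are neither major nor handled by Lemma \ref{kiszamolo}: that lemma's proof only treats the $q^2$ points off $\widetilde{\ell_0}$ (and dually), while $d(\widetilde{P_0},H)=2$ for $H_0$ and for every $\zeta$-set, so $\widetilde{P_0}$ may well collide with common points (dually for $\widetilde{\ell_0}$). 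This case cannot be repaired by your mechanism of routing a line through one member of the pair, because every line through $\widetilde{P_0}$ is a major line or $\widetilde{\ell_0}$, hence unusable in $\cH_2$. The paper adjoins $\widetilde{P_0}$ and $\widetilde{\ell_0}$ to the collision graph $X'$ and keeps them outside every searching set, i.e.\ forces on them the all-$2$ codeword, so that all of their clique-mates get covered instead; some such special treatment is mandatory and is absent from your outline.

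The second gap is the coordination step that you yourself flag as ``the main obstacle'': this is precisely where the paper's proof does its real work, and your proposed fallbacks do not close it. Your absorption idea (carry a pair-breaking line on a coordinate where the relevant major codeword already has a $1$) is the right positive half, and it is exactly what the paper does by insisting that lines through $Q(H)$ meet $\widetilde{\ell_0}$ inside $T(H)$. But the negative half is missing: to separate $(Q,Q')$ in coordinate $j$ you must also guarantee that \emph{none} of the roughly $q/2$ realizer lines of $H_{k+j}$ passes through $Q'$, a global constraint coupling all choices made in that set. The paper resolves this by fixing, in advance, searching-set codewords $Q(H)$ and $R(H)$ on the collision classes as well (not only on the majors), imposing the explicit negative conditions that no chosen line meets $Q(H)^C$ or lies in $R(H)^C$, and then proving feasibility by the free-line counting and greedy argument (each relevant point retains at least $\frac{3q}{8}-\log_2 q-2$ free lines, etc.). Your ``first-moment selection'' alternative does not suffice with only $\lceil\log_2 q\rceil$ sets: a uniformly random routing separates a fixed pair in a fixed coordinate with probability about $2e^{-1/2}\bigl(1-e^{-1/2}\bigr)\approx 0.48<\tfrac12$, so the per-pair failure probability over all coordinates is about $q^{\log_2 0.52}\approx q^{-0.94}$, and with up to $q/8$ surviving pairs the expected number of failures grows like a positive power of $q$; a subsequent alteration is delicate precisely because rerouting one realizer line changes the coordinates of $q$ other common points and of one major codeword. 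So the deterministic searching-set structure on $X'$ together with the counting argument is not omissible detail --- it is the core of the proof that your proposal leaves open.
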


\begin{proof}
Let's look at a table of the representations of the vertices so far, with $H_{ij}$ denoting a chosen $\zeta$-set on base point $\widetilde{P_i}$ and base line $ \widetilde{\ell_j}$.

\[
\begin{array}{c|c|ccccc|c|c|ccccc|c|}
~ & \widetilde{P_0} & \widetilde{P_1} & \dots &  & \dots & \widetilde{P_q} & P_1 \dots P_{q^2} & \widetilde{\ell_0} & \widetilde{\ell_1} & \dots & \widetilde{\ell_j} & \dots & \widetilde{\ell_q} & \ell_1 \dots \ell_{q^2} \\ \hline
H_0 & 2 & 0 & \dots & 0 & \dots & 0 & 2 & 1 & 3 & \dots & 3 & \dots & 3 & 1 \\ \hline
H_{ij} \in \cH_1 & 2 & 2 & \dots2 & 1 & 2\dots & 2 & 0/1/2 & 2 & 2 & \dots2 & 1 & 2\dots & 2 & 0/1/2
\end{array}
\]

Considering this table, one can determine which pairs of vertices could have the same representation:

\begin{enumerate}
\item pairs of common vertices not being separated after Lemma \ref{kiszamolo},
\item pairs of major points and also pairs of major lines (which are not basis of $\zeta$-sets in $\cH_1$),
\item the support line and some common lines and dually the support point and some common points may form some non-separated pairs.
\end{enumerate}

These are the only possibilities which we need to take care of for the suitable choice of $\cH_2$. This observation motivates us how to choose lines and points to a member $H \in \cH_2$. In the following we will build up $\cL[H]$, and then $\cP[H]$  will be chosen in the same way dually. But before that we need some structural observations.

We make an auxiliary graph $X$ with vertex set consists of those common vertices which are in the remaining non-separated pairs and two such vertices are joined with an edge if they have the same representation so far. Clearly $X$ is just the disjoint union of some cliques. Moreover by the choice of $H_0$ in every clique either every vertex is a point of $\Pi_q$ or every vertex is a line of $\Pi_q$. Just for convenience let's assume that we choose $k=\lceil3\log_2 q\rceil  + 3$ $\zeta$-sets in the first part of the construction. That means by Lemma \ref{kiszamolo} that we have at most $\frac{q}{8}$ pairs of vertices which have not been separated yet. Hence in $X$ the number of edges is at most $\frac{q}{8}$. Together with the observation above we get that $X$ has at most $\frac{q}{4}$ vertices.

Modify $X$ a little bit by adding the support point and line to it if needed (let's call the graph we get this way $X'$), namely whenever there is a common point or line which is non-separated from the support point or line respectively. There are three options for $\widetilde{P_0}$ (and similarly for $\widetilde{\ell_0}$):

\begin{itemize}
\item $\widetilde{P_0}$ have already been separated from every other vertex in $G(\Pi_q)$.
\item $\widetilde{P_0}$ has the same representation as some of the points of $X$, hence joins that clique in $X'$.
\item $\widetilde{P_0}$ has the same representation as a common point not in $X$ (hence it's exactly one vertex), therefore they both inserted into $X'$ with an edge between them.
\end{itemize}

Either way the number of vertices in $X'$ is at most $\frac{q}{4} + 4$. Denote the vertices in $X' \cap \cP[G(\Pi_q)]$ with $\cP[X']$ and similarly $\cL[X']$ will denote the vertices in $X'$ corresponding to a line of $\Pi_q$. By the note above we know that there is no edge in $X'$ between $\cP[X']$ and $\cL[X']$.


Now let's recall the notion of a \textit{searching set} for a search problem. Roughly speaking we would like to distinguish all elements of a set by pointing out some subsets and create a 0-1 vector for every element of the set where the value of the $j^{\mathrm{th}}$ coordinate is 1 iff this element is inside the $j^{\mathrm{th}}$ chosen subset. These distinguishing subsets are also called searching sets. It is known that for an $n$ element set we need $\lceil \log_2 n \rceil$ such searching set to reach our goal by distinguishing all of the elements of the set.

The main idea behind the selection of $\cL[H]$ is based on these searching sets. Namely we will consider a family of searching sets on the $q$ major points and also on $\cP[X']$ and on $\cL[X']$ too, with sizes half of their corresponding domain set. For an $H \in \cH_2$ we will choose one of the searching sets for the major points, for $\cP[X']$ and also for $\cL[X']$. Let's denote these searching sets with $T(H)$, $Q(H)$ and $R(H)$ respectively. We can assume that $\widetilde{P_0}$ and $\widetilde{\ell_0}$ are always outside of every chosen searching sets without the loss of generality.

We are going to prove that we can find $\frac{q}{2}$ such common lines (they will form $\cL[H]$) which satisfies the following properties:

\begin{itemize}
\item through every point in $T(H)$ there is exactly one line from $\cL[H]$,
\item through every point in $Q(H)$ there is exactly one line from $\cL[H]$,
\item they are not going through any points of $\cP[X'] \setminus Q(H)=:Q(H)^{C}$,
\item they are not in $\cL[X'] \setminus R(H)=:R(H)^{C}$,
\item they have not been assigned to any sets in our construction yet.
\end{itemize}


Let's call those common lines which satisfy the last three requirements \textit{free} lines. Let's make another auxiliary graph denoted by $Y$ which is a bipartite graph where the first class consists of the vertices of $T(H)$ and the other class is just $\cP[G(\Pi_q)] \setminus \left ( \cup_{i=0}^{q+1} \widetilde{\cP_i} \right ) \setminus Q(H)^C$ and there is an edge between two vertices (obviously from different classes) iff the line defined by these two vertices is a free line. We need to give a lower bound on the number of free lines through an arbitrary point from $T(H)$ and also from $Q$ because of the first two requirements.

Consider a point $v \in T(H)$. Through this point there are $q$ common lines but it may happen that this point was chosen as a base to a $\zeta$-set therefore it is possible that $\frac{q}{2}$ of these common lines were used before. Since we are not going to use lines through the points of $Q(H)^C$ and lines from $R(H)^C$ it could rule out another $|Q(H)^C| + |R(H)^C| = \frac{1}{2} \left ( \frac{q}{4} + 4 \right ) = \frac{q}{8} + 2$ lines through $v$. Furthermore if $v$ was in some searching sets on the major points before during this phase then in every such case there was one line which was inserted into that particular set from $\cH_2$. This gives us a lower bound on the number of free lines through an arbitrary point of $T(H)$:

\[
|\{\mathrm{free~ lines~ through~ }v \in T(H)\}| \ge q - \frac{q}{2} - \left ( \frac{q}{8}+2 \right ) - \log_2 q = \frac{3q}{8} - \log_2 q - 2
\]

Note that the degree of $v$ in $Y$ is at least $q \cdot \left (\frac{3q}{8} - \log_2 q - 2 \right )$ because every free line through $v$ has $q$ points on it which corresponds to $q$ edges in the graph. Consider now an arbitrary point $u \in Q(H)$. Through $u$ there are $\frac{q}{2}$ common lines which meets the support line in $T(H)$. Again we don't want to use lines through the points of $Q(H)^C$ and lines from $R(H)^C$ which could rule out $\frac{q}{8} + 2$ lines just as above. And if $u$ was in some searching sets on $\cP[X']$ before during this phase then in every such case there was one line which was inserted into that particular set from $\cH_2$. Hence:

\[
|\{ \mathrm{free~ lines~ through~ }u \in Q(H)\}| \ge \frac{q}{2} - \left ( \frac{q}{8}+2 \right ) - \log_2 q = \frac{3q}{8} - \log_2 q - 2
\]

Now we can see that if $q$ is large enough then there are many free lines through these points in $Y$. Moreover for an arbitrary point $u\in Q(H)$ if we ignore those lines which contains another point of $Q(H)$ then there is at least $\frac{3q}{8} - \log_2 q - 2 - \left (\frac{q}{8}+2\right ) = \frac{q}{4} - \log_2 q -4$ such free lines through $u$. Let's consider the points of $Q(H)$ one by one then we can choose an edge (thus a free line) for the first one which fulfills the requirements and does not contain any other points from $Q(H)$. Drop the meeting point of this line and the support line for any other points of $Q(H)$ because of the first requirement. Then we can continue this in a greedy way because of the counting above (for the last member of $Q(H)$ we drop another at most $\frac{q}{8}+2$ meeting points but the number of free lines through that point is still at least $\frac{q}{4} - \log_2 q - 4 - \left (\frac{q}{8}+2\right ) = \frac{q}{8} - \log_2 q - 6$).

Now we just need to choose one line through every uncovered points of $T(H)$ carefully. Note that if we drop those lines through these points which meet $Q(H)$ (and in parallel delete the $q$ edges for each of them from $Y$) then again there remains at least $\frac{3q}{8} - \log_2 q - 2 - \left (\frac{q}{8}+2\right ) = \frac{q}{4} - \log_2 q -4$ free lines through them. By choosing from the free lines greedily works again because of the calculations above. Dually one can construct $\cP[H]$ in a similar way which completes the set $H \in \cH_2$. By repeating this argument we can construct $\lceil\log_2 q\rceil$ such sets in $\cH_2$ (we included the decreasing of the degrees above therefore this greedy approach will work).
\smallskip

In the preceding paragraphs we just showed a way of choosing $\log_2 q$ sets all of which fulfills the requirements for it's lines and dually for it's points too. The only thing remained is to verify that these set in $\cH_2$ really take care of every pair of non-separated vertices after choosing $H_0$ and $\cH_1$.


The major points (and dually the major lines) are indeed separated because for a particular $H \in \cH_2$ those major points will have coordinate 1 which are inside $T(H)$ since we chose a line going through them and the other major points have coordinate 2. Since we chose a family of searching sets on these major points then eventually they will be distinguished at the end. If we examine all of the other remaining pairs we can notice that the coordinates for $\widetilde{P_0}$ and $\widetilde{\ell_0}$ will be 2 for every set in $\cH_2$ and for a particular $H \in \cH_2$ the points of $Q(H)$ will have coordinate at most 1 (we chose a line through them and maybe we put them into $\cP[H]$) but for the points of $Q(H)^C$ the coordinate is surely 2 (did not choose a line through them and we exclude them from being in $\cP[H]$). Similar argument hold for the lines of $R(H)$ and $R(H)^C$. Again since we chose a family of searching sets on the $\cP[X']$ and on $\cL[X']$ all of the remaining pairs will be separated too.
\end{proof}

\bigskip

Observe that by adding all of the non-used vertices of $G(\Pi_q)$ to $H_{-1}$ we obtain a resolving partition indeed,
with  $1+(3\lceil\log_2 q\rceil + 3)+\lceil\log_2 q\rceil + 1 \leq 4\lceil\log_2 q\rceil + 5$ classes which completes the proof of Theorem \ref{felso}.

\begin{remark}
One can easily see that in these Lemmas we don't rely heavily on the parity of $q$, if $q$ is odd everything still works with slight modifications.
\end{remark}


\cut{\begin{proof}[\textbf{Proof of Theorem \ref{felso}}]
Observe first that for any point $Q$, $d(H_0, Q)=0 \Leftrightarrow Q\in \widetilde{\ell_0} \setminus \widetilde{P_0}$, and
for any line $f$, $d(H_0, f)=3 \Leftrightarrow  \widetilde{P_0} \in f, f \neq \widetilde{\ell_0}$, hence $H_0$ separates major points and also major lines from any other point of line, furthermore  $d(H_0, f)\not\equiv d(H_0, Q) \pmod 2$ for all point $Q$ and line $f$, hence all points are separated from all lines.

Let us choose $k= \lceil 3\log_2 q\rceil + 3$ $\zeta$-sets on district point and line basis according to Lemma \ref{kiszamolo} to form the system $\cH_1$. Then we apply Lemma \ref{elrendezo}  to   the non-separated points and  non-separated lines of  $\cH_0\cup\cH_1$ which form  a
 system of weight $w\leq q/2$ to  choose $\lceil\log_2 q\rceil$ disjoints sets
and gain $\cH_1$.

The conditions of Lemma  \ref{elrendezo} guarantees that the major points and the major lines are separated form each other, thus they have a unique resolving representation in the whole graph. Similarly, the vertices outside the core are
 separated form each other. Finally, note that the support point and support line is separated from the them as well due to Lemma  \ref{elrendezo}, thus the given system is a resolving partition with $\lceil 4\log_2 q\rceil + 5$ sets provided that $q$ is large enough (via the condition of Lemma \ref{elrendezo}).
\end{proof}}


\section{Further related problems and remarks}

Although the lower and upper bounds we proved do not match, we strongly believe that the construction given in the upper bound is optimal in some sense. The reason for this is the following: one have to create a set system where the majority of the sets  contain neither more points  nor more lines than $cq$ for a small constant $c$. Indeed, the result of Blokhuis \cite{blok} implies that $cq$ lines are incident with at least roughly $\frac{c}{c+1}q^2$ points, hence a set containing this many lines assign the same distance for the majority of the points. \\
This observation in fact improves our lower bound via the result of Katona \cite{Katona} on separating systems of given size, but only in the remainder term.
Thus several questions remain open concerning the optimal construction.

\begin{problem} Can the above bounds be improved if the plane in coordinatized, that is, $G$ is the incidence graph of the plane $\PG(2,q)$?
\end{problem}

\begin{problem} Prove that there exists a constant $c$ for which $\pd(G(\Pi_q))=(c+o(1))\log_2q$.
\end{problem}

Our construction mainly based on sets of collinear points and lines forming a pencil, in order to separate approximately half of the lines (incident to the points) from the other lines and do the same for the points for every set in the resolving partition. Note that the points of a maximal arc and the dual configuration owns the same property. This motivates the following natural question.

\begin{problem} Does there exist a set of disjoint ovals in $\Pi_q$ of cardinality $c\log_2q$ ($c\leq 4$) which separates the lines of the projective plane $\Pi_q$?
\end{problem}

A related problem only requires a set of ovals to cover (intersect) every line.

\begin{problem} What is the minimal cardinality of a set of (disjoint) ovals   in $\Pi_q$ for which no line is skew to all of them?
\end{problem}

It is believed \cite{Szonyi2, Ughi} that the order of magnitude is $O(\log q)$ for $q$ odd, which provides $O(q\log q)$ points on the plane, even the size of small minimal blocking (point)sets is much less. Note that the $q$ even case is completely different, where $3$ ovals can cover every line in the Galois plane  $\PG(2,q)$ due to Sz\H{o}nyi, Ill\'es and Wettl \cite{Szonyi2}.

Our result can also be considered as a first step to the determination of the partition dimension of incidence graph of symmetric structures in general, analogously to the metric dimension case \cite{Bailey}.


\begin{thebibliography}{00}

\bibitem{babai} Babai, L. On the complexity of canonical labeling of strongly regular graphs. SIAM J. Comput. 9(1) (1980), 212--216

\bibitem{Bailey} Bailey, R. F. The metric dimension of small distance-regular and strongly regular graphs. Australasian J. Comb., 62(1) (2015), 18--34.

\bibitem{Bailey1} Bailey, R. F., Cáceres, J., Garijo, D., González, A., Márquez, A., Meagher, K.,  Puertas, M. L. Resolving sets for Johnson and Kneser graphs. European J. Combinatorics, 34(4) (2013), 736--751.

\bibitem{BC}  Bailey, R. F., Cameron, P. J. Base size, metric dimension and other invariants of groups and graphs. Bulletin London Math. Soc. 43(2) (2011), 209--242.

\bibitem{blok} Blokhuis, A. Extremal problems in finite geometries. In: Frankl P., Füredi Z., Katona G., Miklós D. (eds.) Extremal Problems for Finite Sets, vol. 3 (1994), pp. 111--135. Bolyai Society Mathematical Studies, Budapest.


\bibitem{Ch} Chartrand, G., Salehi, E., Zhang, P. The partition dimension of a graph. Aequationes Math. 59(1--2) (2000), 45--54.


\bibitem{CGH} Chappell, G. G., Gimbel, J., Hartman, C. Bounds on the metric and partition dimensions of a graph, Ars Combinatoria. 88 (2008), 349--366.

 \bibitem{ide0} Gravier, S., Parreau, A., Rottey, S., Storme, L.,  Vandomme, É. Identifying codes in vertex-transitive graphs and strongly regular graphs. Electronic J. Combinatorics, 22(4) (2015), 4--6.

\bibitem{gacs} Gács, A.,  Szőnyi, T. Random constructions and density results. Designs, Codes and Cryptography, 47(1-3) (2008), 267--287.

 \bibitem{ide1} Karpovsky, M. G., Chakrabarty, K.,  Levitin, L. B. On a new class of codes for identifying vertices in graphs. IEEE Transactions on Information Theory, 44(2) (1998), 599--611.


\bibitem{Katona} Katona, Gy. On separating systems of a finite set. J. Combinatorial Theory, 1(2) (1966), 174--194.

\bibitem{ide2} Kim, Y., Kumbhat, M., Nagy, Z. L., Patkós, B., Pokrovskiy, A.,  Vizer, M. Identifying codes and searching with balls in graphs. Discrete Applied Mathematics, 193 (2015), 39-47.





\bibitem{He} Héger, T., Takáts, M.  Resolving Sets and Semi-Resolving Sets in Finite Projective Planes. Electronic J. Combinatorics 19(4) (2012), P30.

\bibitem{slater} Slater, P. J. Dominating and reference sets in a graph. J. Math. Phys. Sci., 22(4) (1988), 445--455.

\bibitem{Szonyi2} Ill\'es, T., Sz\H{o}nyi, T.,  Wettl, F. Blocking sets and maximal strong representative systems in finite projective planes. Mitt. Math. Sem. Giessen, 201 (1991), 97--107.

\bibitem{T1} Tomescu, I. Discrepancies between metric dimension and partition dimension of a connected graph. Discrete Mathematics 308(22) (2008), 5026--5031.

\bibitem{Ughi} Ughi, E. On $(k, n)$-blocking sets which can be obtained as a union of conics. Geometriae Dedicata, 26(3) (1988), 241--245.

\end{thebibliography}
\end{document}